\documentclass[11pt]{article}

\usepackage[margin=1in]{geometry}
\usepackage{amsmath,amssymb,amsthm}
\usepackage{microtype}
\usepackage[hidelinks]{hyperref}
\hypersetup{
  pdfauthor={Abhishek Shankar},
  pdftitle={A Log-Free n to the 1/5 Bound for Chowla's Cosine Problem}
}

\newtheorem{theorem}{Theorem}[section]
\newtheorem{lemma}[theorem]{Lemma}
\newtheorem{proposition}[theorem]{Proposition}
\theoremstyle{remark}
\newtheorem{remark}[theorem]{Remark}

\newcommand{\T}{\mathbb T}
\newcommand{\Z}{\mathbb Z}
\newcommand{\N}{\mathbb N}
\newcommand{\1}{\mathbf 1}
\newcommand{\e}{\mathrm e}
\newcommand{\card}[1]{\lvert #1\rvert}
\newcommand{\abs}[1]{\lvert #1\rvert}

\title{A Log-Free $n^{1/5}$ Bound for Chowla's Cosine Problem}
\author{Abhishek Shankar\\[2pt]
  \normalsize Researcher at OpenProblem.ai\\[-1pt]
  \small\href{mailto:abhishek.shankar@openproblem.ai}
  {\texttt{abhishek.shankar@openproblem.ai}}\\[-1pt]
  \small ORCID:
  \href{https://orcid.org/0009-0002-6552-3622}
  {\texttt{0009-0002-6552-3622}}}
\date{Version 1, September 2026}

\begin{document}

\maketitle

\begin{abstract}
For a finite set $S$ of positive integers, put
\[
  K(S):=-\min_{x\in\T}\sum_{s\in S}\cos(2\pi sx).
\]
Bedert recently proved the uniform lower bound
$K(S)\geq \card{S}^{1/5-o(1)}$.  We remove the subpolynomial
loss and prove that
\[
  K(S)\geq c\card{S}^{1/5}
\]
for an absolute constant $c>0$.  The proof combines two estimates
from Bedert's argument with an exact averaging identity for the
asymmetric boundaries of additive intersections.  This identity
replaces the multiplicative-amplification step responsible for the
logarithmic loss.
\end{abstract}

\medskip
\noindent\textbf{2020 Mathematics Subject Classification.}
Primary 42A05; Secondary 11B30.

\smallskip
\noindent\textbf{Keywords.}
Chowla cosine problem, cosine sums, additive combinatorics.

\section{Introduction}

For a nonempty finite set $S\subset\N:=\{1,2,\ldots\}$, define
\[
  f_S(x):=\sum_{s\in S}\cos(2\pi sx),
  \qquad
  K(S):=-\min_{x\in\T}f_S(x),
\]
where $\T:=\mathbb R/\mathbb Z$.  The extremal quantity in
Chowla's cosine problem is
\[
  \mathcal K(N):=\inf_{\substack{S\subset\N\\ \card{S}=N}}K(S).
\]
Ankeny and Chowla asked whether $\mathcal K(N)\to\infty$
\cite[p.~301]{Chowla1952}, and Chowla later conjectured that the
optimal order of magnitude is $\sqrt N$
\cite[pp.~128, 130]{Chowla1965}.  In the opposite direction, a
Sidon-difference construction gives
$\mathcal K(N)\ll\sqrt N$, which remains the best known upper bound;
see \cite[Section~1]{Bedert2026}.  Thus Chowla conjectured that this
construction has the correct order of magnitude.

The first proof that $\mathcal K(N)\to\infty$ follows from Cohen's
work on the Littlewood $L^1$ conjecture \cite{Cohen1960}, as observed
by S.~Uchiyama and M.~Uchiyama \cite{UchiyamaUchiyama1960}.  Roth
subsequently proved
\[
  \mathcal K(N)\gg
  \left(\frac{\log N}{\log\log N}\right)^{1/2}
\]
\cite{Roth1973}.  The resolution of the Littlewood $L^1$ conjecture
by McGehee, Pigno and Smith \cite{McGeheePignoSmith1981}, and
independently by Konyagin \cite{Konyagin1981}, yields the stronger
logarithmic bound $\mathcal K(N)\gg\log N$.  Bourgain was the first to
cross the logarithmic barrier in his 1984 Orsay preprint and subsequent
published work, proving
$\mathcal K(N)>\exp((\log N)^\varepsilon)$ for some
$\varepsilon>0$ \cite{Bourgain1984,Bourgain1986}.  Ruzsa refined Bourgain's
method to obtain
$\mathcal K(N)\geq\exp(c\sqrt{\log N})$ for an absolute constant
$c>0$ \cite{Ruzsa2004}.  Sanders also proved a polynomial estimate
in the special case in which every frequency is $O(N)$
\cite{Sanders2010}.

The first general polynomial lower bounds were obtained independently
and almost simultaneously.  Jin, Milojevi\'c, Tomon and Zhang proved
$\mathcal K(N)\geq N^{1/10-o(1)}$
\cite{JinMilojevicTomonZhang2025}, while Bedert's version~1 gave
$\mathcal K(N)\gg N^{1/12}$ \cite[version~1]{Bedert2026}.
Bedert raised the exponent to $1/7-o(1)$ in version~2 and to
$1/5-o(1)$ in version~3.  More explicitly, combining the
multiplicative-amplification estimate in
\cite[Lemma~7.4 in version~3]{Bedert2026} with the asymmetric-boundary
estimate in \cite[Proposition~7.3 in version~3]{Bedert2026} gives
\[
  \mathcal K(N)\gg \left(\frac{N}{(\log N)^4}\right)^{1/5}
  =N^{1/5-o(1)};
\]
see also \cite[Theorem~1.1 in version~3]{Bedert2026}.  To our
knowledge, the following result removes this logarithmic loss.

\begin{theorem}\label{thm:main}
There is an absolute constant $c>0$ such that every nonempty finite
set $S\subset\N$ satisfies
\[
  \min_{x\in\T}\sum_{s\in S}\cos(2\pi sx)
  \leq -c\card{S}^{1/5}.
\]
Equivalently, $\mathcal K(N)\gg N^{1/5}$.
\end{theorem}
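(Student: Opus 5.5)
We argue by contradiction, following the architecture of Bedert's argument. Fix a large $N=\card{S}$ and put $K:=K(S)$, assuming $K\le c N^{1/5}$ for a small absolute constant $c$. Two estimates are taken from \cite{Bedert2026} (version~3) as black boxes.

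The first is the structural estimate. Since $f_S+K\ge 0$ on $\T$, one has $\int (f_S+K)=K$. Working with the nonnegative function $f_S+K$ and its Fourier coefficients, Bedert's analysis produces a family of additive intersections $A\cap(A+t)$. Here $A$ is a large subset of $S\cup(-S)$ on which $f_S+K$ has small mass, in a suitable weighted sense. Their asymmetric boundaries $(A+t)\setminus A$ then have total weight controlled polynomially by $K$; this is the content of \cite[Proposition~7.3 in version~3]{Bedert2026}.

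The second is the lower estimate. For any sufficiently rich set, the boundaries cannot all be small. Bedert obtained this via the multiplicative amplification of \cite[Lemma~7.4]{Bedert2026}, iterating $\log N$ times, and this iteration is where the factor $(\log N)^4$ enters.

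The plan is to replace that amplification lemma by an exact averaging identity. For a finite set $A$ and a probability measure $\mu$ on shifts, we have
\[
\sum_t \mu(t)\,\card{(A+t)\setminus A}=\card{A}-\sum_t\mu(t)\card{A\cap(A+t)}=\card{A}-\langle \1_A*\1_{-A},\mu\rangle .
\]
We choose $\mu$ to be the normalized spectral measure coming from $f_S+K$, namely $\mu=\widehat{(f_S+K)^2}$ or its convolution square, so that $\mu$ is positive definite and the inner product becomes $\int \abs{\widehat{\1_A}}^2\,\widehat\mu$. Then Parseval together with $\widehat\mu\ge 0$ bounds the average intersection size exactly, in terms of $\|f_S+K\|_2^2\approx N/2+K^2$ and $\int(f_S+K)=K$. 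In this way we get a lower bound of order $\card{A}\,(1-O(K^{a}/N^{b}))$ for the average boundary in a single step, rather than through dyadic iteration.

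Feeding this single-scale lower bound into the upper bound from Proposition~7.3 yields an inequality of the form $N\ll K^5$ with no logarithms, which contradicts $K\le cN^{1/5}$ for $c$ small. Small $N$ is handled trivially, since $K(S)\ge 1/2$ always (for instance, because $\int f_S^2=N/2$ and $\int f_S=0$ force a negative value).

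The main obstacle is making the averaging identity compatible with the asymmetric (one-sided) boundaries appearing in Proposition~7.3. The identity controls symmetric quantities $\card{A\cap(A+t)}$, while the proposition bounds $\card{(A+t)\setminus A}$ weighted against a possibly non-symmetric measure. My first attempt would be to symmetrize $\mu$ via $t\mapsto -t$ and use $\card{(A+t)\setminus A}=\card{(A-t)\setminus A}$, which holds exactly for finite sets. This shows the asymmetry is harmless provided the weights come from a real, even function such as $f_S$. I would then check that the exponents balance precisely to $1/5$.
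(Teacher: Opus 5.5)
There is a genuine gap, and it starts with the object you attribute to Bedert's Proposition~7.3 (Proposition~\ref{prop:boundary-input} here). The asymmetric boundary there is not $(A+t)\setminus A$. It is $B_t=A_t\setminus(-A_t)$ with $A_t=A\cap(A+t)$. Since $A=-A$, this is the set of $a\in A$ with $a-t\in A$ but $a+t\notin A$: the part of the \emph{intersection} that fails to be symmetric, not the complement of the intersection.

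No bound polynomial in $K$ can hold for $\card{(A+t)\setminus A}=\card{A}-\card{A\cap(A+t)}$, because for every $t\notin A-A$ it equals $\card{A}=2N$. Your own remark that $\card{(A+t)\setminus A}=\card{(A-t)\setminus A}$ shows that this quantity cannot see the $a+t$ versus $a-t$ asymmetry that $\card{B_t}\le C_0K^4$ controls. So the averaging step points the wrong way: showing that the intersections $A\cap(A+t)$ are small on average gives nothing to play against Proposition~7.3.

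Two further problems. With $\mu=\widehat{(f_S+K)^2}$, the weighted intersection count is $\int_{\T}\abs{F_A}^2(f_S+K)^2$. This is a fourth-moment quantity, and $\|f_S+K\|_2$ and $\int(f_S+K)$ alone do not control it, so the Parseval step is unjustified. Also, a lower bound of order $\card{A}$ against an upper bound $K^4$ would give $N\ll K^4$, not $N\ll K^5$. The claimed exponent balance is asserted rather than derived.

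What you need is a lower bound $\card{B_t}\gg n/K$ for some $t\neq0$, where $n=\card{A}$, and for that the intersections must be \emph{large}. The paper gets this in two steps.

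First, Roth's estimate (Proposition~\ref{prop:triple-input}): positivity of $F_A+K$ together with $n\ge2K^2$ forces $T=\#\{(a,t)\in A^2:a-t\in A\}\ge n^2/(2K)$.

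Second, an exact count (Lemma~\ref{lem:total-boundary}). Group the pairs $(a,t)\in A^2$ by $(\abs{a},\abs{t})=(u,v)\in P^2$, where $P=A\cap\N$. The four sign choices contribute $2\bigl(\1_A(u-v)-\1_A(u+v)\bigr)^2$ to $\sum_{t\in A}\card{B_t}$. Let $Q=\#\{(u,v)\in P^2:u+v\in P\}$, and let $I\le Q$ count the pairs with both $u\pm v\in A$. Counting then gives $T=6Q$ and $\sum_{t\in A}\card{B_t}=6Q-4I\ge2Q=T/3$.

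Averaging over $t\in A$ yields some $t\neq0$ with $\card{B_t}\ge n/(6K)$. Proposition~\ref{prop:boundary-input} then gives $n\le6C_0K^5$; the fifth power is $K^4$ times the $K$ from Roth's density $1/(2K)$. Your proposal has neither the triple-count input nor any mechanism turning many additive triples into a large asymmetric part, so it does not reach the theorem.
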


The proof is short.  We retain two ingredients from Bedert's work:
a Roth-type lower bound for the number of additive triples and a
pointwise upper bound for certain asymmetric boundary sets.  The new
ingredient in our argument is the exact identity in
Lemma~\ref{lem:total-boundary},
which converts the global triple count directly into a large boundary.
It thereby makes the multiplicative-amplification argument in
\cite[Lemma~7.4]{Bedert2026} unnecessary.

\section{Two inputs from Bedert's argument}

Write $e(x):=\e^{2\pi i x}$.  If
$A\subset\Z$ is finite, set
\[
  F_A(x):=\sum_{a\in A}e(ax).
\]
When $A=-A$, the polynomial $F_A$ is real-valued.  We record the
precise forms of the two results from \cite{Bedert2026} that we use.
Bedert writes $\widehat{1_A}$ for our $F_A$; the conventions agree,
since both use $e(x)=e^{2\pi ix}$ on $\T=\mathbb R/\mathbb Z$.

\begin{proposition}[Roth--Bedert]\label{prop:triple-input}
Let $A=-A\subset\Z\setminus\{0\}$ be finite and nonempty, let
$K>0$, and suppose that $F_A(x)+K\geq0$ for every $x\in\T$.
If $E\subseteq A$ and
$\card{E}\geq2K^2$, then
\[
  \#\{(u,v)\in E^2:u-v\in A\}
  \geq \frac{\card{E}^2}{2K}.
\]
\end{proposition}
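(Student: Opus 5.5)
We follow the Roth-type argument: test the nonnegative function $F_A+K$ against the nonnegative kernel $\abs{F_E}^2$. Since $F_A(x)+K\geq0$ pointwise and $\abs{F_E(x)}^2\geq0$,
\[
  0\leq\int_\T \bigl(F_A(x)+K\bigr)\abs{F_E(x)}^2\,dx
  =\int_\T F_A(x)\abs{F_E(x)}^2\,dx+K\card{E},
\]
where Parseval gives $\int_\T\abs{F_E}^2=\card{E}$. Expanding $\abs{F_E(x)}^2=\sum_{u,v\in E}e((u-v)x)$ and $F_A(x)=\sum_{a\in A}e(ax)$, the first integral is
\[
  \int_\T F_A\abs{F_E}^2=\#\{(u,v)\in E^2: v-u\in A\}=\#\{(u,v)\in E^2:u-v\in A\},
\]
using $A=-A$. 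Since $0\notin A$, diagonal pairs contribute nothing. This step only yields the trivial inequality $T\geq -K\card{E}$, where $T$ is the triple count, so positivity of $F_A+K$ must be combined with the fact that $E\subseteq A$.

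To do this, replace $\abs{F_E}^2$ by a kernel that weights $F_A$ itself. We use $F_A+K\geq0$ and the integral
\[
  \int_\T (F_A+K)\,\abs{F_E+\lambda}^2\geq0
\]
for a suitable real $\lambda$, chosen so that cross terms pick up $\int F_A\overline{F_E}=\card{E}$ (using $E\subseteq A$). Since $\int F_A=0$ (as $0\notin A$), expanding gives
\[
  T+K\card{E}+2\lambda\card{E}+\lambda^2K\geq0 .
\]
Taking $\lambda$ negative makes the cross term $2\lambda\card{E}$ negative, and the inequality then gives an upper bound for $-T$. The sign has to be arranged so that the result is a lower bound on $T$.

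The natural fix is to test against the negative part. Let $g:=F_A+K\geq0$, so that $\int g=K$, and consider
\[
  \int_\T g\,\abs{F_E}^2\geq0,
\]
together with the Cauchy--Schwarz lower bound
\[
  \int_\T g\,\abs{F_E}^2\geq\frac{\left(\int g\,\abs{F_E}\right)^2}{\int g}.
\]
Write
\[
  \int g\,F_E=\int F_AF_E+K\,\widehat{F_E}(0)=\card{E},
\]
where we use $E\subseteq A=-A$ and $0\notin E$. Since $\abs{\int gF_E}\leq\int g\abs{F_E}$, this gives
\[
  T+K\card{E}=\int g\abs{F_E}^2\geq\frac{\card{E}^2}{K}.
\]
The hypothesis $\card{E}\geq2K^2$ gives $K\card{E}\leq\card{E}^2/(2K)$, hence
\[
  T\geq\frac{\card{E}^2}{K}-K\card{E}\geq\frac{\card{E}^2}{2K},
\]
which is the claimed bound.

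The main obstacle is this last step: finding the weighted Cauchy--Schwarz with weight $g=F_A+K$. Both ingredients it needs are essential. Nonnegativity of $g$ makes $g\,dx$ a measure of total mass $K$, and the condition $E\subseteq A$ produces the correlation $\int g\,F_E=\card{E}$.
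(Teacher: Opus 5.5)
Your final argument is correct. The paper does not reprove this statement; it cites Bedert's Lemma~4.1, and your weighted Cauchy--Schwarz against the nonnegative weight $F_A+K$ (total mass $K$, with $\int_{\T}(F_A+K)F_E=\card{E}$ because $E\subseteq A=-A$, hence $T+K\card{E}\geq\card{E}^2/K$) is the standard Roth argument behind that lemma.

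Your abandoned $\lambda$-attempt already works: taking $\lambda=-\card{E}/K$ gives exactly $T\geq\card{E}^2/K-K\card{E}$, which is the discriminant form of the same Cauchy--Schwarz. So the false starts can simply be deleted, and $K\widehat{F_E}(0)$ should read $K\int_{\T}F_E=0$.
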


This is the Roth-type difference-density estimate in
\cite[Lemma~4.1 in version~3]{Bedert2026}, where it is attributed to
Roth.  The second input is Bedert's estimate for the asymmetric part
of $A\cap(A+t)$, stated as
\cite[Proposition~7.3 in version~3]{Bedert2026}.

\begin{proposition}[Bedert]\label{prop:boundary-input}
There is an absolute constant $C_0>0$ with the following property.
Let $A=-A\subset\Z\setminus\{0\}$ be finite and nonempty, let
$K>0$, and suppose that $F_A(x)+K\geq0$ for every $x\in\T$.
For $t\in\Z$, define
\[
  A_t:=A\cap(A+t),
  \qquad
  B_t:=A_t\setminus(-A_t).
\]
Then
\[
  \card{B_t}\leq C_0K^4
\]
for every $t\neq0$.
\end{proposition}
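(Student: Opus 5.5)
The first step is to rewrite $B_t$ combinatorially using the symmetry $A=-A$. An element $a$ lies in $-A_t$ exactly when $-a\in A$ and $-a\in A+t$. Since $-a\in A$ holds automatically for $a\in A$, the second condition reads $a+t\in -A=A$. Hence
\[
  B_t=\{a\in A:\ a-t\in A,\ a+t\notin A\}.
\]
So $B_t$ is the set of right endpoints of the maximal runs of common difference $t$ in $A$ that have length at least two. The task is to show that a set whose cosine sum is bounded below by $-K$ has at most $O(K^4)$ such endpoints in any fixed direction $t\neq0$. The tool I would use throughout is the positivity
\[
  \int_{\T}(F_A+K)\,\abs{G}^2\geq0,
  \qquad\text{equivalently}\qquad
  \sum_{a\in A}\widehat{\abs{G}^2}(a)\geq -K\|G\|_2^2,
\]
valid for every trigonometric polynomial $G$.

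The plan is to apply this inequality to test polynomials built from $B:=B_t$ and its translates by $\pm t$. A natural first choice is
\[
  G(x)=\sum_{b\in E}\bigl(e(bx)-e((b+t)x)\bigr)
  \qquad\text{for } E\subseteq B,
\]
so that $\|G\|_2^2=2\card{E}$. In the sum $\sum_{a\in A}\widehat{\abs G^2}(a)$ the "cross" differences $\pm t$ and $b-b'-t$ carry a minus sign. The endpoint property ($b\in A$, $b-t\in A$, $b+t\notin A$) should make these negative contributions systematically exceed the positive ones. The positive terms are counted by $\#\{(u,v)\in E^2:u-v\in A\}$, and the negative ones by the corresponding count with the shift $t$ inserted. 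By the endpoint property, every pair with $b-b'\in A$ but $b-b'+t\notin A$ contributes a defect.

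To make this defect large, I would combine it with Proposition~\ref{prop:triple-input}. That proposition gives, for $\card{E}\geq2K^2$, at least $\card{E}^2/(2K)$ pairs with $u-v\in A$. I would then run a second, shifted positivity argument, which should show that a positive proportion of these pairs survive the shift asymmetrically. This would produce an inequality of the form $\card{E}^2/K^{2}\lesssim K\card{E}$ up to constants, or something comparable. Optimizing the choice of $E$, or iterating once more, should give $\card{B}\ll K^4$. The exponent $4$ is consistent with losing one factor $K^2$ from the threshold $\card{E}\geq 2K^2$ and a further factor $K^2$ from the shifted count.

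The main obstacle is the middle step. It requires showing that the additive pairs inside $B_t$ supplied by Proposition~\ref{prop:triple-input} genuinely produce negative mass in the shifted test function, rather than cancelling against pairs that were already inside runs. I would first try choosing $E$ to be a random or dyadic sub-piece of $B_t$ and applying positivity with both $G$ and its twist $G(x)e(tx)$, then averaging the two inequalities so that the terms interior to runs cancel exactly. If this does not work, I would fall back on Bedert's original argument in \cite[Section~7 in version~3]{Bedert2026}, which is where this estimate is proved in full.
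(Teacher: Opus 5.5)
The paper gives no proof of this proposition. It imports it verbatim from \cite[Proposition~7.3 in version~3]{Bedert2026} as one of its two black-box inputs. Your reformulation $B_t=\{a\in A:a-t\in A,\ a+t\notin A\}$ is correct; it is \eqref{eq:Bt-pointwise}. Your closing fallback to Bedert is also exactly the paper's treatment. The self-contained argument you sketch in between, however, has a genuine gap. Its ``middle step'' is the entire content of the estimate, and the mechanisms you propose cannot supply it.

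\textbf{The positivity inequality you use.} Take $G=\sum_m g(m)e(m\,\cdot)$ with $g=\1_E-\1_{E+t}$. Then $\int_{\T}(F_A+K)\abs{G}^2=2P-N_+-N_-+2K\card{E}$, where $P=\#\{(u,v)\in E^2:u-v\in A\}$ and $N_\pm=\#\{(u,v)\in E^2:u-v\pm t\in A\}$. This form sees $\1_A$ only at differences of frequencies of $G$, and it is invariant under translating $E$. The endpoint property of $B_t$, by contrast, concerns $\1_A$ at the points $b\pm t$ themselves. It says nothing about $N_\pm$, apart from the diagonal terms $u=v$, which contribute only $-2\card{E}\1_A(t)$ and are absorbed by $2K\card{E}$. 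To violate positivity you would need $N_++N_->2P+2K\card{E}$. Yet Proposition~\ref{prop:triple-input} gives a \emph{lower} bound on $P$, which pushes the other way. Likewise, pairs with $b-b'\in A$ and $b-b'+t\notin A$ make the form larger, not smaller.

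\textbf{The twist remedy.} This step is void, since $\abs{G(x)e(tx)}^2=\abs{G(x)}^2$. The ``twisted'' inequality is literally the same inequality, and averaging the two cancels nothing.

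\textbf{The exponent bookkeeping.} The hoped-for inequality $\card{E}^2/K^2\lesssim K\card{E}$ would give $\card{B_t}\ll K^3$, not $K^4$, so the $K^2\cdot K^2$ accounting is numerology. Moreover, a bound $\card{B_t}\ll K^3$, fed into Lemma~\ref{lem:total-boundary} and the proof of Theorem~\ref{thm:symmetric}, would give $\mathcal K(N)\gg N^{1/4}$. That is precisely the improvement the paper's concluding remarks say would require a better exponent than $K^4$ in this proposition. It cannot be treated as a routine gap-filling step.

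What is missing is a way to feed the cubic condition $\1_A(b)\1_A(b-t)\bigl(1-\1_A(b+t)\bigr)$ into the positivity of $F_A+K$ through quantities that see absolute positions. Examples would be linear terms like $\sum_{a\in A}g(a)$ in a Roth-type Cauchy--Schwarz argument, or Fourier-side convolution formulas for $\1_{A\cap(A+t)}$. Doing this is the substance of Bedert's proof. As written, your proposal amounts to the citation, which is what the paper itself uses.
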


All constants below are absolute.  No quantitative information about
$C_0$ will be needed.
All integrals over $\T$ are taken with respect to normalized Haar
measure.
The constant $C_0$, and hence the constant in Theorem~\ref{thm:main},
is effectively computable by tracking the constants in Bedert's
proof; we make no attempt to optimize it.

\section{The boundary-averaging identity}

We next give the new counting observation.  The strength needed for
Theorem~\ref{thm:main} is the inequality
\eqref{eq:boundary-lower}; the more precise identity
\eqref{eq:exact-boundary} makes the multiplicities transparent.

\begin{lemma}[Total-boundary identity]\label{lem:total-boundary}
Let $A=-A\subset\Z\setminus\{0\}$ be finite.  For $t\in\Z$, define
\[
  A_t:=A\cap(A+t),
  \qquad
  B_t:=A_t\setminus(-A_t),
\]
and put
\[
  T:=\#\{(a,t)\in A^2:a-t\in A\}.
\]
Then
\begin{equation}\label{eq:boundary-lower}
  \sum_{t\in A}\card{B_t}\geq\frac{T}{3}.
\end{equation}
More precisely, if $P:=A\cap\N$ and
\begin{align*}
  Q&:=\#\{(u,v)\in P^2:u+v\in P\},\\
  I&:=\#\{(u,v)\in P^2:u-v\in A\ \text{and}\ u+v\in A\},
\end{align*}
then $0\leq I\leq Q$ and
\begin{equation}\label{eq:exact-boundary}
  T=6Q,
  \qquad
  \sum_{t\in A}\card{B_t}=6Q-4I.
\end{equation}
\end{lemma}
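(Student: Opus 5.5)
The plan is to reduce both sides of \eqref{eq:exact-boundary} to counts over the positive part $P=A\cap\N$, using the symmetry $A=-A$ and $0\notin A$. Inequality \eqref{eq:boundary-lower} then follows at once: since $I\le Q$, we get $\sum_{t\in A}\card{B_t}=6Q-4I\geq 2Q=T/3$.

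\emph{Step 1: simplify $B_t$.} For $a\in A_t$ we have $-a\in A$ automatically. So $a\notin -A_t$ holds exactly when $-a\notin A+t$, i.e.\ when $-a-t\notin A$, i.e.\ when $a+t\notin A$. Hence
\[
  B_t=\{a\in A:\ a-t\in A,\ a+t\notin A\}.
\]
Summing over $t\in A$ gives $\sum_{t\in A}\card{B_t}=T-J$, where
\[
  J:=\#\{(a,t)\in A^2:\ a-t\in A,\ a+t\in A\}.
\]

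\emph{Step 2: show $J=4I$.} The conditions defining $J$ are invariant under $a\mapsto -a$, which swaps $a\pm t$ with $-(a\mp t)$. They are also invariant under $t\mapsto -t$. Since $0\notin A$, the map $(a,t)\mapsto(\abs a,\abs t)$ is exactly $4$-to-$1$ from the set counted by $J$ onto the set counted by $I$. Hence $J=4I$. Moreover, for $(u,v)$ counted in $I$ we have $u+v>0$, so $u+v\in P$; this gives $0\le I\le Q$.

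\emph{Step 3: show $T=6Q$.} This is the step needing the most care, because of multiplicities. A pair $(a,t)$ counted in $T$ is the same as an ordered triple $(x,y,z)=(t,a-t,a)\in A^3$ with $x+y=z$. Equivalently, it is an ordered zero-sum triple $(x,y,-z)$ in $A$.

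Since no entry is zero, exactly one entry of a zero-sum triple has the sign opposite to the other two. After negating everything if necessary (a factor $2$), we may assume two entries are positive, say $u,v\in P$, and the third is $-(u+v)$ with $u+v\in P$. There are $3$ choices for the position of the odd-signed entry. The ordered pair $(u,v)$ filling the other two positions is then determined. This gives a bijection between the triples counted by $T$ and the set $\{\pm1\}\times\{1,2,3\}\times\{(u,v)\in P^2:u+v\in P\}$, so $T=2\cdot3\cdot Q=6Q$.

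I would double-check the diagonal case $u=v$ separately. There the triple $\{u,u,-2u\}$ has $3$ orderings and $2$ signs, and $(u,u)$ is counted once in $Q$, which is consistent with the bijection.

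Combining the three steps gives $\sum_{t\in A}\card{B_t}=T-J=6Q-4I$, which is \eqref{eq:exact-boundary}.
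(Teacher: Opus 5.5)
Your proof is correct and is essentially the paper's argument. It uses the same pointwise description $B_t=\{a\in A: a-t\in A,\ a+t\notin A\}$, the same reduction to $P$ through the four sign classes $(\pm u,\pm v)$, and the same zero-sum-triple bijection giving $T=6Q$. The only difference is bookkeeping: you write $\sum_{t\in A}\card{B_t}=T-J$ with $J=4I$, which spares you the paper's separate bijection $D=2Q$. The paper instead uses the per-class form $\sum_{t\in A}\card{B_t}=2\sum_{u,v\in P}C_{u,v}^2$, which is equivalent to yours since $2\sum(d_{u,v}+p_{u,v})=T$ and $4\sum d_{u,v}p_{u,v}=4I$, and which is what yields its fixed-column identity.
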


\begin{proof}
Since $A=-A$, we have
\[
  -A_t=A\cap(A-t)=A_{-t}.
\]
Consequently,
\begin{equation}\label{eq:Bt-pointwise}
  B_t=\{a\in A:a-t\in A,\ a+t\notin A\}.
\end{equation}

For $u,v\in P$, define
\[
  d_{u,v}:=\1_A(u-v),
  \qquad
  p_{u,v}:=\1_A(u+v),
  \qquad
  C_{u,v}:=d_{u,v}-p_{u,v}.
\]
Both $d_{u,v}$ and $p_{u,v}$ belong to $\{0,1\}$.  For
$a,t\in A$, set
\[
  \beta(a,t):=\1_A(a-t)\bigl(1-\1_A(a+t)\bigr).
\]
By \eqref{eq:Bt-pointwise},
\[
  \sum_{t\in A}\card{B_t}
  =\sum_{(a,t)\in A^2}\beta(a,t).
\]
Fix $u,v\in P$.  The four ordered pairs $(a,t)\in A^2$ with
$\abs{a}=u$ and $\abs{t}=v$ contribute
\begin{align*}
  \beta(u,v)=\beta(-u,-v)&=d_{u,v}(1-p_{u,v}),\\
  \beta(u,-v)=\beta(-u,v)&=p_{u,v}(1-d_{u,v}).
\end{align*}
Their total contribution is
\[
  2d_{u,v}(1-p_{u,v})+2p_{u,v}(1-d_{u,v})
  =2(d_{u,v}-p_{u,v})^2.
\]
Every ordered pair in $A^2$ lies in exactly one such sign class,
including when $u=v$.  Hence
\begin{equation}\label{eq:boundary-frobenius}
  \sum_{t\in A}\card{B_t}
  =2\sum_{u,v\in P}C_{u,v}^2.
\end{equation}

Let
\[
  D:=\#\{(u,v)\in P^2:u-v\in A\}.
\]
The diagonal contributes nothing, because $0\notin A$.  The map
\[
  (u,v)\longmapsto(v,u-v)
\]
is a bijection from the pairs counted by $D$ with $u>v$ to the
pairs counted by $Q$; its inverse is
$(x,y)\mapsto(x+y,x)$.  Swapping $u$ and $v$ handles the pairs
with $u<v$, and therefore
\begin{equation}\label{eq:D-equals-2Q}
  D=2Q.
\end{equation}
Moreover, because $u+v>0$, the condition $u+v\in A$ is equivalent
to $u+v\in P$, and thus $0\leq I\leq Q$.  Expanding the square in
\eqref{eq:boundary-frobenius} gives
\begin{align}
  \sum_{u,v\in P}C_{u,v}^2
  &=\sum_{u,v\in P}
    \bigl(d_{u,v}+p_{u,v}-2d_{u,v}p_{u,v}\bigr)\notag\\
  &=D+Q-2I
   =3Q-2I.\label{eq:frobenius-count}
\end{align}
Together, \eqref{eq:boundary-frobenius} and
\eqref{eq:frobenius-count} show that
\begin{equation}\label{eq:boundary-QI}
  \sum_{t\in A}\card{B_t}=6Q-4I.
\end{equation}

It remains to identify $T$.  The map
\[
  (a,t)\longmapsto(t,a-t,-a)
\]
is a bijection from the pairs counted by $T$ to the zero-sum triples
\[
  \mathcal Z:=\{(z_1,z_2,z_3)\in A^3:
  z_1+z_2+z_3=0\}.
\]
Because $0\notin A$, each triple in $\mathcal Z$ has two positive
entries and one negative entry, or two negative entries and one
positive entry.  Fixing one of the six possible sign patterns, the
absolute values of the two entries with the same sign, in their
coordinate order, form a pair counted by $Q$.  Conversely, every
pair counted by $Q$ determines one zero-sum triple of each fixed sign
pattern.  Thus
\begin{equation}\label{eq:T-equals-6Q}
  T=\card{\mathcal Z}=6Q.
\end{equation}
This also covers $u=v$: coordinate order fixes the multiplicity, so
no additional factor occurs.

Equations \eqref{eq:boundary-QI} and \eqref{eq:T-equals-6Q} prove
\eqref{eq:exact-boundary}.  Finally, since $I\leq Q$,
\[
  \sum_{t\in A}\card{B_t}
  =6Q-4I\geq2Q=\frac{T}{3},
\]
which is \eqref{eq:boundary-lower}.
\end{proof}

\begin{remark}\label{rem:fixed-column}
The proof also gives the fixed-column identity
\[
  \card{B_t}=\sum_{u\in P}
  \bigl(\1_A(u-t)-\1_A(u+t)\bigr)^2
  \qquad(t\in P).
\]
Thus the total-boundary estimate is not an application of an
inequality that loses logarithms; it is an exact count followed only
by the elementary bound $I\leq Q$.
\end{remark}

\begin{remark}[Comparison with Bedert]\label{rem:comparison}
For comparison, \cite[Lemma~7.4 in version~3]{Bedert2026} produces a
nonzero $t$ with
\[
  \card{B_t}\gg\frac{n}{K(\log n)^4}
\]
by amplifying a single large intersection $A\cap(A+t_0)$ along the
multiplicative semigroup generated by the primes up to
$\asymp(\log n)^2$.  When $n\geq2K^2$, combining
Proposition~\ref{prop:triple-input} with
Lemma~\ref{lem:total-boundary} instead gives
$\card{B_t}\geq n/(6K)$ for some $t\in A$.  The factor
$(\log n)^4$ is the only logarithmic loss in
\cite[Section~7 of version~3]{Bedert2026}.
\end{remark}

\begin{remark}[Sharpness of the averaging constant]
The factor $1/3$ in \eqref{eq:boundary-lower} is asymptotically
sharp.  If $P=\{1,\ldots,m\}$ and $A=P\cup(-P)$, then
$I/Q\to1$, and consequently
\[
  \frac{\sum_{t\in A}\card{B_t}}{T}
  =1-\frac{2I}{3Q}\longrightarrow\frac13.
\]
For example, when $m=79$ one has $I/Q=78/79$ and
$\sum_{t\in A}\card{B_t}=(81/79)(T/3)$.
\end{remark}

\section{Proof of the main theorem}

We first prove a symmetric exponential-sum formulation.

\begin{theorem}\label{thm:symmetric}
There is an absolute constant $c_0>0$ such that, for every nonempty
finite symmetric set $A=-A\subset\Z\setminus\{0\}$,
\[
  -\min_{x\in\T}F_A(x)\geq c_0\card{A}^{1/5}.
\]
\end{theorem}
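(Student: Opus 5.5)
Set $n:=\card{A}$ and $K:=-\min_{x\in\T}F_A(x)$. Since $0\notin A$, $\int_\T F_A=0$, so $F_A$ takes negative values and $K>0$; by definition $F_A(x)+K\geq0$ on $\T$, so both Proposition~\ref{prop:triple-input} and Proposition~\ref{prop:boundary-input} apply. We aim to show $n\leq C K^5$ for an absolute constant $C$. This gives $K\geq C^{-1/5}n^{1/5}$, which is the claim with $c_0:=C^{-1/5}$.

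\textbf{Step 1 (small case).} If $n<2K^2$, we are in a trivial regime. Here I would use $K\geq$ some absolute positive constant, since $F_A(x)+K\geq0$ forces $K\geq0$ and in fact $K\geq1$-ish. One can check $K\geq 1/2$ via the Fej\'er-type average $\int F_A(x)(1+\cos 2\pi a_0 x)\,dx$ with $a_0=\max A$: it equals $1$, but that is the wrong sign. Instead it is simplest to note $n<2K^2$ gives $n\leq 2K^2\leq 2K^5$ as soon as $K\geq1$. For $K<1$, I would handle it separately: $\int F_A^2=n$ and $F_A\geq-K$ with mean zero give $\int\abs{F_A}=2\int F_A^-\leq 2K$. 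Combined with $\int\abs{F_A}\geq$ an absolute constant for $n\geq1$ (e.g.\ $\int\abs{F_A}\geq\abs{\widehat{F_A}(a_0)}=1$), this yields $K\geq1/2$. Thus in all cases $n<2K^2$ implies $n\leq 64K^5$.

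\textbf{Step 2 (main case $n\geq2K^2$).} Apply Proposition~\ref{prop:triple-input} with $E=A$. The quantity
\[
  \#\{(u,v)\in A^2:u-v\in A\}
\]
is exactly $T$ from Lemma~\ref{lem:total-boundary}, so $T\geq n^2/(2K)$. Lemma~\ref{lem:total-boundary} then gives
\[
  \sum_{t\in A}\card{B_t}\geq \frac{T}{3}\geq\frac{n^2}{6K}.
\]
Every $t\in A$ is nonzero, so Proposition~\ref{prop:boundary-input} bounds each summand by $C_0K^4$. Summing over the $n$ values of $t$ gives
\[
  \frac{n^2}{6K}\leq nC_0K^4,
\]
that is, $n\leq 6C_0K^5$. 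Taking $C:=\max(64,6C_0)$ finishes the proof.

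\textbf{Main obstacle.} The substantive work is entirely in the two imported propositions and the identity of Lemma~\ref{lem:total-boundary}, so the assembly is straightforward. The one point needing care is the degenerate regime: ensuring $K$ is bounded below by an absolute constant so that the case $n<2K^2$ is absorbed into $n\ll K^5$. The $L^1$ argument in Step~1 handles this. We should also check that Proposition~\ref{prop:triple-input} is applied with $E=A$ in exactly the form in which $T$ is defined, namely with ordered pairs and the condition $u-v\in A$; it is.
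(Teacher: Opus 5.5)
Your proof is correct and is essentially the paper's argument: you make the same split at $n<2K^2$, then apply Proposition~\ref{prop:triple-input} with $E=A$, Lemma~\ref{lem:total-boundary}, and Proposition~\ref{prop:boundary-input}, and summing the pointwise bound $\card{B_t}\leq C_0K^4$ over $t\in A$ is equivalent to the paper's extraction of a single $t$ with $\card{B_t}\geq n/(6K)$. The only difference is the lower bound on $K$ in the small case: your $L^1$ argument gives $K\geq1/2$, whereas the paper gets $K\geq1$ in one line from $1=\abs{\int_\T(F_A+K)e(-a_0x)\,dx}\leq\int_\T(F_A+K)\,dx=K$ (and you should drop the abandoned Fej\'er-type remark from the write-up).
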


\begin{proof}
Put $n:=\card{A}$ and
\[
  K:=-\min_{x\in\T}F_A(x).
\]
Then $F_A+K\geq0$ pointwise.  We first note that $K\geq1$.  Indeed,
if $a_0\in A$, then the nonnegative function $G:=F_A+K$ satisfies
\[
  1=\abs{\int_{\T}G(x)e(-a_0x)\,dx}
  \leq\int_{\T}G(x)\,dx=K.
\]

If $n<2K^2$, then $K\geq1$ gives
\[
  n<2K^2\leq2K^5.
\]
We may therefore assume that $n\geq2K^2$.  Applying
Proposition~\ref{prop:triple-input} with $E=A$ yields
\[
  T:=\#\{(a,t)\in A^2:a-t\in A\}
  \geq\frac{n^2}{2K}.
\]
Lemma~\ref{lem:total-boundary} now gives
\[
  \sum_{t\in A}\card{B_t}
  \geq\frac{T}{3}
  \geq\frac{n^2}{6K}.
\]
Averaging over the $n$ elements of $A$, there is a $t\in A$ such
that
\begin{equation}\label{eq:large-boundary}
  \card{B_t}\geq\frac{n}{6K}.
\end{equation}
Since $0\notin A$, this $t$ is nonzero.  Proposition
\ref{prop:boundary-input} and \eqref{eq:large-boundary} therefore
imply
\[
  \frac{n}{6K}\leq C_0K^4,
\]
and hence $n\leq6C_0K^5$.  Combining the two cases proves
\[
  K\geq c_0n^{1/5},
  \qquad
  c_0:=\max\{2,6C_0\}^{-1/5}>0.
\]
\end{proof}

\begin{proof}[Proof of Theorem~\ref{thm:main}]
Let $S\subset\N$ be nonempty, put $N:=\card{S}$, and symmetrize:
\[
  A:=S\cup(-S).
\]
The union is disjoint, so $\card{A}=2N$, and
\[
  F_A(x)
  =\sum_{s\in S}\bigl(e(sx)+e(-sx)\bigr)
  =2\sum_{s\in S}\cos(2\pi sx).
\]
If $M:=K(S)$, then $-\min_{x\in\T}F_A(x)=2M$.  Theorem
\ref{thm:symmetric} gives
\[
  2M\geq c_0(2N)^{1/5},
\]
and consequently
\[
  M\geq c_0\,2^{-4/5}N^{1/5}.
\]
This proves the result with $c=c_0\,2^{-4/5}$.
\end{proof}

\section{Concluding remarks}

Theorem~\ref{thm:main} removes the logarithmic loss from the
$1/5$-exponent bound in \cite{Bedert2026}, but it does not improve the
exponent itself.  A substantial gap therefore remains between the
present estimate $\mathcal K(N)\gg N^{1/5}$ and Chowla's conjectured
square-root scale $\mathcal K(N)\gg N^{1/2}$.

The analytic input in this note is entirely contained in
Propositions~\ref{prop:triple-input} and
\ref{prop:boundary-input}.  The new contribution is the exact
total-boundary identity of Lemma~\ref{lem:total-boundary} and its use
to pass from the global additive-triple count to a single large
boundary without logarithmic loss.  The identity controls the sum of
the boundary sizes, and the present proof uses it only to extract one
large $B_t$.  Improving the exponent past $1/5$ by this route would
therefore require either a stronger exponent than $K^4$ in
Proposition~\ref{prop:boundary-input}, or additional information
showing that many boundary sets are simultaneously large.

\section*{Statements and declarations}

\paragraph{Acknowledgements.}
The author thanks Benjamin Bedert for making the arguments on which
this note builds publicly available.

\paragraph{Author responsibility and tool use.}
Abhishek Shankar is the sole named author and accepts responsibility
for every claim.  AI-assisted tools were used for exploratory
computations, drafting assistance, and automated sanity checks.  The
author is solely responsible for all mathematical claims and conclusions.

\paragraph{Data and code availability.}
No datasets were generated or analyzed.  A short Python program
verifying the new finite counting identity for all nonempty subsets
of $\{1,\ldots,15\}$ is included with the ancillary source files.
This computation is a sanity check and is not used in the proof.

\paragraph{Funding and competing interests.}
The author received no specific funding for this work and declares
no competing interests.

\begingroup
\small
\bibliographystyle{alpha}
\bibliography{references}
\endgroup

\end{document}